\documentclass[11pt]{article}

\usepackage[a4paper,margin=1.1in]{geometry}
\usepackage{setspace}
\usepackage[T1]{fontenc}
\usepackage{lmodern}
\usepackage{microtype}

\usepackage{amsmath,amssymb,amsthm}
\usepackage{mathtools}
\usepackage{mathrsfs}
 \usepackage{pgf,tikz}
 \usetikzlibrary{automata}
 \usetikzlibrary{arrows}
\usepackage[numbers]{natbib}

\usepackage[hidelinks]{hyperref}

\usepackage{fancyhdr}
\theoremstyle{plain}
\newtheorem{theorem}{Theorem}[section]
\newtheorem{lemma}[theorem]{Lemma}
\newtheorem{proposition}[theorem]{Proposition}

\theoremstyle{definition}
\newtheorem{definition}[theorem]{Definition}

\theoremstyle{remark}
\newtheorem{remark}[theorem]{Remark}

\newcommand{\R}{\mathbb{R}}
\newcommand{\PP}{\mathbb{P}}
\newcommand{\C}{\mathbb{C}}

\newcommand{\mdr}{{\rm mdr}}
\newcommand{\LL}{\mathcal{L}}

\title{\textbf{On the boundedness of some real line arrangements of type at most one}}
\author{Marek Janasz}
\date{\today}

\begin{document}
\maketitle

\begin{abstract}
In this note, we show that real line arrangements of type at most one, admitting only intersection points of multiplicity at most five, satisfy certain boundedness properties. In particular, we prove that a free real arrangement of \(d\) lines with intersection multiplicities bounded by \(5\) can have at most \(522\) lines and consequently there exist only finitely many combinatorial types of such arrangements.
\end{abstract}
\section{Introduction}\label{s:introduction}
The study of line arrangements in the real and complex projective plane lies at the intersection
of combinatorics, algebraic geometry, and singularity theory. Over the past decades, considerable
attention has been devoted to understanding how algebraic properties of arrangements, such as
freeness or plus-one generated, interact with their combinatorial structure, in particular with the
distribution and multiplicities of intersection points.

A central notion in this area is that of a \emph{free plane curve}, introduced by K.~Saito \cite{Saito} in a general setting and
later developed extensively for hyperplane arrangements. The freeness can be characterized algebraically via the module of Jacobian syzygies or, equivalently,
via the splitting type of the sheaf of logarithmic vector fields \cite{Dimca}. Free arrangements enjoy
remarkable structural properties, yet their existence is strongly constrained by combinatorial and geometric conditions. One of the fundamental topics in the subject is to determine whether certain classes of arrangements are \emph{bounded}, that is, whether there exists an absolute
upper bound on the number of lines once suitable restrictions on the intersection multiplicities
are imposed.

For real line arrangements, the situation is particularly subtle due to the presence of
additional geometric inequalities, such as Melchior’s or Shnurnikov's inequality \cite{Melchior, Sh}, which have no direct complex
counterpart. These inequalities often lead to strong restrictions on the possible combinatorics
of real arrangements and have been successfully used to prove boundedness results in several
contexts. In particular, boundedness results are known for real free line arrangements with
double, triple and quadruple points \cite{JanLes05}.

More recently, the class of \emph{plus-one generated} curves has emerged as a natural
generalization of free and nearly free curves. Introduced and systematically studied by Abe in the setting of hyperplane arrangements \cite{Abe18}, and later by Dimca and Sticlaru \cite{DS20} in the context of plane curves, these objects form the next level in the hierarchy of plane curves defined by the complexity of their Jacobian syzygies. From the perspective of line arrangements, this class provides a natural testing ground for investigating how far boundedness phenomena extend beyond the free case.

The purpose of the present paper is to establish new boundedness results for real line
arrangements of type at most one under explicit constraints on the multiplicities of intersection
points. Our first main result concerns real free line arrangements whose intersection points have
multiplicity at most five. We prove that such arrangements are bounded, and more precisely, that
they consist of at most $522$ lines. The proof combines algebraic constraints coming from
freeness, refined combinatorial counting arguments, and classical inequalities for real line
arrangements.

Our second main result addresses real plus-one generated line arrangements admitting only double,
triple and quadruple intersection points. In this setting, we prove a substantially stronger
bound, showing that the number of lines is at most $47$. This demonstrates that boundedness
persists beyond the class of free arrangements and highlights the rigidity imposed by the
plus-one generated condition when combined with low intersection multiplicities.

The paper is organized as follows. In Section~2 we recall the necessary background on Jacobian
syzygies, Milnor algebras, and the notions of freeness, plus-one generation, and the type of a
plane curve. Section~3 is devoted to real free line arrangements with points of multiplicity at
most five, where we first establish combinatorial bounds on high-multiplicity intersection
points and then prove the main boundedness theorem. In Section~4 we treat the case of real
plus-one generated line arrangements with intersection multiplicities at most four and obtain
the corresponding boundedness result. 
\section{Preliminaries}

We follow the notation introduced in~\cite{Dimca}.
Let us denote by $S := \mathbb{C}[x,y,z]$ the homogeneous coordinate ring of
$\mathbb{P}^2_{\mathbb{C}}$.
Let $C \subset \mathbb{P}^2_{\mathbb{C}}$ be a reduced plane curve of degree $d$
defined by a homogeneous polynomial $f \in S_d$.
We denote by $J_f$ the Jacobian ideal of $f$ which is the ideal generated by the all partials, i.e.,
\[
J_f = \langle f_x, f_y, f_z \rangle.
\]
We consider the graded $S$-module of Jacobian syzygies of $f$, defined by
\[
AR(f) = \{ (a,b,c) \in S^3 : a f_x + b f_y + c f_z = 0 \}.
\]
The minimal degree of a non-trivial Jacobian relation of $f$ is defined as
\[
\mdr(f) := \min \{ r \ge 0 : AR(f)_r \neq 0 \}.
\]
The Milnor algebra associated with $f$ is defined as
\[
M(f) := S / J_f.
\]

\begin{definition}
A reduced plane curve $C$ is called an \emph{$m$-syzygy curve} if its Milnor
algebra $M(f)$ admits a minimal graded free resolution of the form
\[
0 \longrightarrow \bigoplus_{i=1}^{m-2} S(-e_i)
\longrightarrow \bigoplus_{i=1}^{m} S(1-d-d_i)
\longrightarrow S^3(1-d)
\longrightarrow S
\longrightarrow M(f)
\longrightarrow 0,
\]
where the integers satisfy
$
e_1 \le e_2 \le \ldots \le e_{m-2}
\quad \text{and} \quad
1 \le d_1 \le \ldots \le d_m.
$
The ordered $m$-tuple $(d_1,\ldots,d_m)$ is called the set of \emph{exponents} of
$C$.
\end{definition}

\begin{definition}[{\cite[Definition 1.1]{her}}]
Let $C =\{f = 0\}\subset \mathbb{P}^{2}_{\mathbb{C}}$ be a reduced curve of degree $d$, and let $d_1$ and $d_2$ be the two smallest degrees of a minimal system of generators of the module of Jacobian syzygies $AR(f)$.  
We say that $C$ has \emph{type} $t(C)$, where the positive integer $t(C)$ is defined as
\[t(C) = d_1 + d_2 + 1 - d.\]
\end{definition}
The following definitions come from \cite[Theorem 1.1]{her}.
\begin{definition}
A reduced plane curve $C \subset \mathbb{P}^2_{\mathbb{C}}$ is said to be \emph{free} with the exponents $(d_{1},d_{2})$ if and only if $t(C)=0$.
\end{definition}
\begin{definition}
A reduced plane curve $C \subset \mathbb{P}^2_{\mathbb{C}}$ is said to be \emph{plus-one generated} with the exponents $(d_1,d_2,d_3)$ if and only if $t(C)=1$.
\end{definition}
In order to decide whether a given curve either is free or plus-one generated, we can use the following crucial results.

\begin{theorem}[{du Plessis -- Wall, \cite{duP}}]
\label{dup}
Let $C = \{f=0\}\subset \mathbb{P}^{2}_{\mathbb{C}}$ be a reduced curve of degree $d$ and let $d_1 = {\rm mdr}(f)$. Let us denote the total Tjurina number of $C$ by $\tau(C)$.
Then the following two cases hold.
\begin{enumerate}
\item[a)] If $d_1 < d/2$, then $\tau(C) \leq \tau_{max}(d,d_1 )= (d-1)(d-d_1-1)+d_{1}^2$ and the equality holds if and only if the curve $C$ is free.
\item[b)] If $d/2 \leq d_{1} \leq d-1$, then
$\tau(C) \leq \tau_{max}(d,d_{1})$,
where, in this case, we set
$$\tau_{max}(d,d_{1})=(d-1)(d-d_{1}-1)+d_{1}^2- \binom{2d_{1}-d+2}{2}.$$
\end{enumerate}
\end{theorem}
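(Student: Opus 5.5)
The plan is to work with the graded pieces of the Milnor algebra $M(f)=S/J_f$ and compare them with those of $M(f_s)$, where $f_s$ is a smooth curve of degree $d$. Set $T=3(d-2)$. The standard identities I will use are:
\begin{itemize}
\item $\dim M(f)_k=\dim M(f_s)_k+\dim AR(f)_{k-d+1}$ for $k$ in the relevant range;
\item $\dim M(f)_k=\tau(C)$ for $k\ge T$;
\item the defect $\dim M(f)_k-\dim M(f_s)_k$ stabilizes at $\tau(C)$ for large $k$, since $\dim M(f_s)_k=0$ for $k>T$.
\end{itemize}
The quantity $\tau(C)$ is then recovered from the Hilbert function of $AR(f)$. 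The task is to bound this Hilbert function from below in terms of $d_1=\mdr(f)$.

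For a primitive syzygy $\rho\in AR(f)_{d_1}$, the submodule $S\rho$ contributes $\binom{k-d_1+2}{2}$ in degree $k$. Counting via the exact sequence
\[
0\to AR(f)(-2)\to \dots
\]
gives the identity
\[
\tau(C)=(d-1)^2-\sum(\text{corrections}).
\]
More concretely, I will use the known formula
\[
\tau(C)=(d-1)^2-d_1(d-1-d_1)-\dim \big(AR(f)/S\rho\big)_{\text{suitable}}.
\]
This follows by dualizing the Koszul-type complex $0\to S(-2(d-1))\to\dots$ and using that the second syzygy sheaf is rank-two reflexive, hence locally free on $\mathbb{P}^2$: the sheaf $\mathcal{T}=\widetilde{AR(f)}$ has Chern classes $c_1=1-d$ and $c_2=(d-1)^2-\tau(C)$.

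The key step is then a Bogomolov/restriction-type argument on the rank-two bundle $E=\mathcal{T}\langle d\text{-twist}\rangle$. The section of minimal degree $d_1$ gives an injection $\mathcal{O}(-d_1)\to E$ with torsion-free cokernel $I_Z(d_1+1-d)$, where $Z$ is a zero-dimensional scheme. This yields
\[
c_2(E)=d_1(d-1-d_1)+\deg Z,
\]
that is,
\[
\tau(C)=(d-1)^2-d_1(d-1-d_1)-\deg Z=\tau_{max}(d,d_1)-\deg Z
\]
when $d_1<d/2$. Thus $\tau(C)\le\tau_{max}(d,d_1)$, with equality iff $Z=\emptyset$, iff $E$ splits, iff $C$ is free (Horrocks' criterion).

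In case b), the same sequence holds but $d_1\ge d/2$. Here I bound $\deg Z$ from below by noting that $I_Z(d_1+1-d)$ has no sections in degrees below $d_1$ coming from $AR(f)$, except those generated by $\rho$. Comparing $h^0(I_Z(k))$ with $h^0(\mathcal{O}(k))$ at $k=2d_1-d+1$, where no new syzygies exist by minimality of $d_1$, forces $\deg Z\ge\binom{2d_1-d+2}{2}$.

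I expect this lower bound on $\deg Z$ to be the main obstacle. It requires carefully showing that in degree $2d_1-d+1$ all $\binom{2d_1-d+3}{2}$ conditions are independent, using that no syzygy of degree $<d_1$ exists apart from the multiples of $\rho$.
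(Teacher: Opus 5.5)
The paper does not prove this statement; it only cites du Plessis--Wall, so your argument has to stand on its own. Part a) is sound. With $E=\widetilde{AR(f)}$ you have $c_1(E)=1-d$ and $c_2(E)=(d-1)^2-\tau(C)$. The minimal syzygy $\rho$ gives $0\to\mathcal{O}(-d_1)\to E\to\mathcal{I}_Z(d_1+1-d)\to 0$, hence $\tau(C)=(d-1)(d-d_1-1)+d_1^2-\deg Z$ for every $d_1$, not only when $d_1<d/2$. The equality case follows because an extension of line bundles on $\mathbb{P}^2$ splits. You should state the two facts this relies on. First, $H^0(E(j))=AR(f)_j$, by left exactness of $H^0$ on $0\to E\to\mathcal{O}^3\to\mathcal{O}(d-1)$. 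Second, $\rho$ vanishes only in codimension two, since otherwise dividing by the common factor would give a syzygy of degree $<d_1$.

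The genuine gap is in part b), at exactly the step you flag. You propose to work in degree $2d_1-d+1$ and show that $Z$ imposes all $\binom{2d_1-d+3}{2}$ conditions there, i.e.\ $h^0(\mathcal{I}_Z(2d_1-d+1))=0$. Twisting the sequence by $d_1$ gives $h^0(\mathcal{I}_Z(2d_1-d+1))=\dim AR(f)_{d_1}-1$. Minimality of $d_1$ says nothing about $AR(f)_{d_1}$, which can have dimension larger than one. So your claim is false in general, and it would yield the stronger, false bound $\deg Z\ge\binom{2d_1-d+3}{2}$. For a generic arrangement of $4$ lines, $d_1=2$ and $\tau=6$, so $\deg Z=9-2-6=1<3$. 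Your justification, ``no syzygy of degree $<d_1$ apart from multiples of $\rho$'', also does not parse, since multiples of $\rho$ have degree $\ge d_1$.

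The correct step is one degree lower. Twisting by $d_1-1$ gives $0\to\mathcal{O}(-1)\to E(d_1-1)\to\mathcal{I}_Z(2d_1-d)\to 0$. Since $H^0(E(d_1-1))=AR(f)_{d_1-1}=0$, this forces $H^0(\mathcal{I}_Z(2d_1-d))=0$. So the restriction map $H^0(\mathcal{O}(2d_1-d))\to H^0(\mathcal{O}_Z)$ is injective, and $\deg Z\ge\binom{2d_1-d+2}{2}$; here $2d_1-d\ge 0$ is exactly the hypothesis of b). No independence of conditions is needed. With this correction the proof is complete, and the Milnor-algebra preliminaries at the start of your proposal are unused and can be dropped.
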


\begin{proposition}[{Dimca -- Sticlaru, \cite{DS20}}]\label{prop:DimStr}
Let $C \subset \mathbb{P}^2_\C$ be a reduced plane curve defined by $f=0$ of degree $d \ge 3$, admitting exactly three minimal syzygies with exponents $(d_1,d_2,d_3)$.  
Then $C$ is plus-one generated if and only if its total Tjurina number satisfies
\[
\tau(C) = (d-1)^2 - d_1(d-d_1-1) - (d_3-d_2+1).
\]
\end{proposition}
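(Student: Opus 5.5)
The plan is to compute $\tau(C)$ directly from the minimal resolution of $M(f)$, using the fact that for a reduced curve the Milnor algebra has Hilbert polynomial equal to the constant $\tau(C)$. Here $M(f)$ has finite-length saturation defect, and $\dim M(f)_k=\tau(C)$ for $k\gg 0$.

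\emph{Step 1 (setting up the resolution).} For a $3$-syzygy curve the resolution is
\[
0\to S(-e)\to \bigoplus_{i=1}^{3}S(1-d-d_i)\to S^3(1-d)\to S\to M(f)\to 0 .
\]
Writing $H_{S(-a)}(k)=\binom{k-a+2}{2}$ for $k\gg0$, I will take the alternating sum
\[
H_{M(f)}(k)=\binom{k+2}{2}-3\binom{k-d+3}{2}+\sum_{i=1}^3\binom{k-d-d_i+3}{2}-\binom{k-e+2}{2}.
\]
The quadratic coefficient vanishes automatically. Requiring the linear coefficient to vanish (since $H_{M(f)}$ is eventually constant) gives $-3(d-1)+\sum_i(d-1+d_i)-e=0$, that is,
\[
e=d_1+d_2+d_3 .
\]
Expanding the constant term then yields a closed formula
\[
\tau(C)=\tfrac12\Big(-3(d-1)^2+\sum_{i=1}^3 (d-1+d_i)^2-e^2\Big)+(\text{terms linear in the shifts, which cancel by the previous relation}),
\]
so $\tau(C)$ becomes an explicit quadratic expression $F(d;d_1,d_2,d_3)$ in the exponents. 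This computation is routine, and I will only check it carefully for signs.

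\emph{Step 2 (the plus-one generated case).} If $C$ is plus-one generated, then $d_1+d_2=d$. Substituting $d_2=d-d_1$ and $e=d+d_3$ into $F$, I expect a direct simplification to
\[
(d-1)^2-d_1(d-d_1-1)-(d_3-d_2+1),
\]
which proves the forward implication.

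\emph{Step 3 (the converse, expected main obstacle).} For the reverse direction I need to know that any $3$-syzygy curve satisfies $d_1+d_2\ge d$. This holds because $d_1+d_2\le d-1$ would force the first two syzygies to generate a free module, so the curve would be free with only two generators, contradicting $m=3$. I will then set $s:=d_1+d_2-d\ge 0$ and write $F$ as a function of $d_1,s,d_3$ by substituting $d_2=d-d_1+s$. The goal is to show
\[
F=(d-1)^2-d_1(d-d_1-1)-(d_3-d_2+1)-g(s),
\]
with $g(0)=0$ and $g(s)>0$ for $s\ge1$, where $g$ is essentially a product like $s(d_3-d_2+\dots)$, nonnegative because $d_3\ge d_2$. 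Then equality of $\tau(C)$ with the stated value forces $s=0$, i.e.\ $t(C)=1$.

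Extracting and sign-controlling $g(s)$ is where I expect the real work. If the direct algebra is messy, my fallback is to compare with the bound $\tau_{max}(d,d_1)$ from Theorem~\ref{dup} together with the constraint $d_3\ge d_2$, showing that $s\ge1$ pushes $\tau$ strictly below the stated value.
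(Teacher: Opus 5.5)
The paper gives no proof of this proposition; it only quotes it from \cite{DS20}. Your route is correct and is the standard Hilbert-polynomial argument: compute $\tau(C)$ from the resolution, then use $d_1+d_2\ge d$ for $3$-syzygy curves.

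Carrying out your Steps 1--3 gives $e=d_1+d_2+d_3$ and
\[
\tau(C)=(d-1)(d_1+d_2+d_3)-(d_1d_2+d_1d_3+d_2d_3).
\]
With $s=d_1+d_2-d$ this becomes exactly
\[
\tau(C)=(d-1)^2-d_1(d-d_1-1)-(d_3-d_2+1)-s\,(s+d_3-d_2+2).
\]
So your $g(s)=s(s+d_3-d_2+2)$ is positive for $s\ge 1$, the converse is immediate, and the fallback via Theorem~\ref{dup} is unnecessary.

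The one point to phrase more carefully is the input $s\ge 0$. Generating a free submodule is not the relevant property, since any two non-proportional syzygies do that. What you need is the following chain:
\begin{itemize}
\item Two distinct minimal generators $r_1,r_2$ of $AR(f)$ are not proportional.
\item Hence $r_1\times r_2=h\,\nabla f$ with $\deg h=d_1+d_2-d+1\ge 0$.
\item If $h$ is a nonzero constant, Saito's criterion gives $AR(f)=Sr_1\oplus Sr_2$, contradicting $m=3$.
\end{itemize}
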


\section{The boundedness of real free line arrangements with points of multiplicity at most five}
 We will use the following inequality that comes from \cite{Sh}. For a line arrangement $\mathcal{L}$ we denote by $t_{k} = t_{k}(\mathcal{L})$ the number of $k$-fold intersection points in $\mathcal{L}$.
\begin{theorem}[Shnurnikov]
Let $\mathcal{L} \subset \mathbb{P}^{2}_{\mathbb{R}}$ be an arrangement of $d$ lines such that $t_{d}=t_{d-1}=t_{d-2} = 0$. Then the following inequality holds:
\[ t_2 + \frac{3}{2}t_3 \;\ge\; 8 + \sum_{r\ge 4}\frac{4r-15}{2}\,t_r.\]
\end{theorem}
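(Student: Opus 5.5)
The inequality is quoted from \cite{Sh}, so in the paper it could simply be cited. If I had to prove it, I would follow the classical Euler-characteristic approach of Melchior, refined by a local counting (discharging) argument on triangular faces.

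\emph{Step 1: cell decomposition.} The arrangement $\mathcal{L}$ induces a cell decomposition of $\mathbb{P}^2_{\mathbb{R}}$. Its counts are $f_0=\sum_r t_r$, $f_1=\sum_r r\,t_r$ (each line through a point of multiplicity $r$ is cut there), and $f_2=f_1-f_0+1$ by $\chi(\mathbb{P}^2_{\mathbb{R}})=1$. Let $p_k$ be the number of $k$-gonal faces. Then $\sum_k p_k=f_2$ and $\sum_k k\,p_k=2f_1$. Combining these with weight $(4-k)$ gives
\[ p_3=4+\sum_{r\ge 2}(2r-4)\,t_r+\sum_{k\ge 5}(k-4)\,p_k . \]
Multiplying by $4$ and comparing with the target (written as $2t_2+3t_3\ge 16+\sum_{r\ge4}(4r-15)t_r$), one checks that it suffices to prove the upper bound
\[ 4p_3 \;\le\; 2t_2+11t_3+\sum_{r\ge 4}(4r-1)\,t_r+4\sum_{k\ge5}(k-4)p_k . \tag{$\ast$}\]

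\emph{Step 2: discharging.} To prove $(\ast)$, I would give each triangle a charge of $4$ and redistribute it to its three vertices and to neighbouring faces with at least five sides. The goal is that a double point receives at most $2$, a triple point at most $11$, an $r$-fold point with $r\ge4$ at most $4r-1$, and a $k$-gon with $k\ge5$ at most $4(k-4)$. The local facts available are these: an $r$-fold point has $2r$ corners, and two triangles sharing a vertex and an edge through it force a specific local configuration. The bounds $4r-1<8r$ show there is slack at high-multiplicity points. The hard case is double points, since a double point can have all four surrounding faces triangular, as in near-pencil-like configurations. There, charge must be routed along edges to the opposite vertices or faces instead. This is exactly where the hypothesis $t_d=t_{d-1}=t_{d-2}=0$ must enter: it excludes the pencil and the near-pencil-type arrangements in which triangles cluster around double points without compensating polygons or high-multiplicity vertices.

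\emph{Main obstacle.} Designing the discharging rules around double points and triple points is the crux. I would first try sending each triangle's charge to its vertex of highest multiplicity, splitting ties. I would then handle the residual case of a triangle with three double-point vertices by passing charge through its edges to the adjacent faces. One would need to show that, unless the arrangement is one of the excluded near-pencils, at least one of those adjacent faces has $k\ge5$ sides or meets a point of multiplicity at least $3$. Verifying that no vertex or face is overcharged requires a case analysis of the local pictures (Shnurnikov's lemmas on adjacent triangles). I expect this case analysis to be long but elementary.
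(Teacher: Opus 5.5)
The paper gives no proof of this inequality; it simply quotes it from \cite{Sh}, as your first sentence says. Your Step~1 is correct. In fact, since the Euler relation is an identity, $(\ast)$ is \emph{equivalent} to the stated inequality, so the whole content of Shnurnikov's theorem lies in $(\ast)$.

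As a proof, however, the proposal has a genuine gap. Step~2 fixes no discharging rules and checks nothing, and the concrete indications you give are wrong. The remark that $4r-1<8r$ gives slack is inverted. $8r$ is what an $r$-fold point collects when all $2r$ of its corners are triangles sending it their full charge, so $4r-1<8r$ is a deficit. Your first rule (each triangle sends $4$ to its vertex of highest multiplicity) therefore already fails on admissible arrangements. Take $d-3\ge 6$ concurrent lines and three further general lines. The three crossings of the extra lines lie in at most three of the $d-3$ sectors at the centre, so at least $2(d-3)-6$ corners at the centre are triangles whose other two vertices are double points. The centre then receives at least $8d-48>4(d-3)-1$. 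An even split of $4/3$ per corner does leave slack $\frac{4r}{3}-1$ at points with $r\ge 3$. But then a double point with four triangular corners, as in simplicial arrangements, receives $\frac{16}{3}>2$.

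Your account of where the hypothesis enters is also off. In the near pencil, every triangle contains the $(d-1)$-fold point. Now take $d-2$ concurrent lines together with two further lines meeting at a point of one of them ($d\ge 6$). Then $t_2=2(d-3)$ and $t_3=t_{d-2}=1$, and every triangle has a vertex of multiplicity at least $3$. Yet $t_2+\frac32 t_3=2d-\frac92<2d-\frac72=8+\frac{4(d-2)-15}{2}$.

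So in the excluded cases, $(\ast)$ fails because of a high-multiplicity point almost all of whose corners are triangles. It does not fail because triangles cluster at double points with no high-multiplicity vertex nearby. Your plan reroutes only the charge of triangles with three double vertices, so it cannot detect this. A correct argument has to control the wheel of triangles around each point of large multiplicity. It must also use the global fact, encoded by $t_d=t_{d-1}=t_{d-2}=0$, that at least three lines avoid every point. That is exactly the part left undone, and it is the real substance of \cite{Sh}. As it stands, your proposal amounts to the citation plus a correct reformulation.
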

We start with a general result of independent interest.
\begin{lemma}\label{thm:tk_global_bound}
Let $\mathcal \LL$ be an arrangement of $d$ lines in $\mathbb{P}^2_\R$.
Fix an integer $k\ge 4$ and assume that all intersection points have multiplicity at most $k$,
i.e.\ $t_i(\mathcal L)=0$ for all $i\ge k+1$. Then the number of $k$--fold
intersection points $t_{k}$ satisfies the following bounds.

\smallskip
\noindent\textbf{(1) Case $d\ge  k+3$.} One has
\[
t_k \;\le\; \frac{d(d-1)-16}{k^2+3k-15}.
\]
\smallskip
\noindent\textbf{(2) General case.} One has
\[
t_k \;\le\; \frac{d}{k}\left\lfloor\frac{d-1}{k-1}\right\rfloor .
\]
\smallskip
In particular, for every $d\ge 2$ we have
\[
t_k \;\le\;
\max\!\left\{
\frac{d}{k}\left\lfloor\frac{d-1}{k-1}\right\rfloor,\;
\left\lfloor\frac{d(d-1)-16}{k^2+3k-15}\right\rfloor
\right\},
\]
where the second term makes sense only when $d\ge k+3$.
\end{lemma}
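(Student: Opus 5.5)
For part (1) the plan is to combine Shnurnikov's inequality with the standard incidence count of pairs of lines,
\[
\binom{d}{2} \;=\; \sum_{r\ge 2}\binom{r}{2}t_r ,\qquad\text{i.e.}\qquad d(d-1)=\sum_{r=2}^{k} r(r-1)\,t_r .
\]
Since all multiplicities are at most $k$ and $d\ge k+3$, we have $t_d=t_{d-1}=t_{d-2}=0$, so Shnurnikov's inequality applies. Multiplying it by $2$ gives
\[
2t_2+3t_3 \;\ge\; 16+\sum_{r=4}^{k}(4r-15)\,t_r .
\]
We add $\sum_{r=4}^k (r^2-r)t_r$ to both sides. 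The left side then becomes at most $2t_2+6t_3+\sum_{r\ge4} r(r-1)t_r = d(d-1)$, using $3t_3\le 6t_3$ and $t_r\ge 0$. The right side becomes $16+\sum_{r=4}^k (r^2+3r-15)t_r$. Each coefficient $r^2+3r-15$ is positive for $r\ge 4$ (it equals $13$ at $r=4$), so we may drop every term except $r=k$. This yields
\[
d(d-1)\;\ge\;16+(k^2+3k-15)\,t_k ,
\]
which is exactly bound (1). Because $t_k$ is an integer, the floor in the final statement follows. The one point to check is this coefficient bookkeeping: after adding the pair-count terms, the $t_2$ and $t_3$ contributions on the left must be dominated by $d(d-1)$. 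This works because the left-hand coefficients $2$ and $3$ are at most $2\cdot1$ and $3\cdot2$ respectively.

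For part (2) I would use a purely combinatorial double count of incidences between lines and $k$-fold points. Fix a line $\ell$. Any two distinct $k$-fold points on $\ell$ share no other line, since two lines meet in only one point. So each $k$-fold point on $\ell$ uses $k-1$ of the remaining $d-1$ lines, and these sets are disjoint. Hence $\ell$ contains at most $\lfloor (d-1)/(k-1)\rfloor$ such points. Counting the pairs (line, $k$-fold point on it) in two ways gives
\[
k\,t_k \;\le\; d\left\lfloor\frac{d-1}{k-1}\right\rfloor ,
\]
which is bound (2). This holds for every $d$, with no reality assumption needed.

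The combined statement is then immediate: for $d\ge k+3$ both bounds hold, and for smaller $d$ only bound (2) is available, so $t_k$ is at most the maximum of the applicable terms. The main obstacle, mild as it is, lies in part (1): verifying the hypothesis $t_d=t_{d-1}=t_{d-2}=0$ from $d-2\ge k+1$, and confirming that the inequality direction survives the addition of the pair count.
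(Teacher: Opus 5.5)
Your proof is correct and follows essentially the same route as the paper. For (1), you combine Shnurnikov's inequality with the pair count $d(d-1)=\sum_{r} r(r-1)t_r$ and discard every term except the $t_k$ term. Adding the pair count instead of substituting for $t_2$ is the same computation. For (2), you double count incidences between lines and $k$-fold points, using that each such point on a line uses up $k-1$ of the other $d-1$ lines.
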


\begin{proof}
Counting unordered pairs of distinct lines in two ways yields
\begin{equation}\label{eq:pairs_general_k}
\binom{d}{2} \;=\; \sum_{r\ge 2}\binom{r}{2}\,t_r.
\end{equation}
Under the assumption $t_r=0$ for $r\ge k+1$, identity \eqref{eq:pairs_general_k} becomes
\begin{equation}\label{eq:pairs_trunc_general_k}
\binom{d}{2} \;=\; t_2 + \sum_{r=3}^{k}\binom{r}{2}\,t_r,
\end{equation}
hence
\begin{equation}\label{eq:t2_general_k}
t_2 \;=\; \binom{d}{2} - \sum_{r=3}^{k}\binom{r}{2}\,t_r.
\end{equation}

\medskip
\paragraph{The case $d\ge k+3$.}
Then the maximal multiplicity satisfies $m(\mathcal L)\le k\le d-3$, so we may apply Shnurnikov's inequality
\begin{equation}\label{eq:HL_general_k}
t_2 + \frac{3}{2}t_3 \;\ge\; 8 + \sum_{r\ge 4}\frac{4r-15}{2}\,t_r.
\end{equation}
Since $t_r=0$ for $r\ge k+1$, \eqref{eq:HL_general_k} reduces to
\begin{equation}\label{eq:HL_trunc_general_k}
t_2 + \frac{3}{2}t_3 \;\ge\; 8 + \sum_{r=4}^{k}\frac{4r-15}{2}\,t_r.
\end{equation}
Substituting \eqref{eq:t2_general_k} into \eqref{eq:HL_trunc_general_k} gives
\[
\binom{d}{2}
-
\sum_{r=3}^{k}\binom{r}{2}\,t_r
+
\frac{3}{2}t_3
\;\ge\;
8 + \sum_{r=4}^{k}\frac{4r-15}{2}\,t_r,
\]
hence, after rearranging,
\begin{equation}\label{eq:rearranged_general_k}
\binom{d}{2} - 8
\;\ge\;
\frac{3}{2}t_3
+
\sum_{r=4}^{k}
\left(
\binom{r}{2} + \frac{4r-15}{2}
\right)t_r.
\end{equation}
All terms on the right-hand side are nonnegative. In particular,
\[
\binom{d}{2} - 8
\;\ge\;
\left(
\binom{k}{2} + \frac{4k-15}{2}
\right)t_k.
\]
A direct computation yields
\[
\binom{k}{2} + \frac{4k-15}{2}
=
\frac{k(k-1)}{2} + \frac{4k-15}{2}
=
\frac{k^2+3k-15}{2}.
\]
Therefore,
\[
t_k
\;\le\;
\frac{2\bigl(\binom{d}{2}-8\bigr)}{k^2+3k-15}
=
\frac{d(d-1)-16}{k^2+3k-15},
\]
which proves (1).

\medskip
\paragraph{General bound.}
Fix a line arrangement $\LL$. For every line $\ell$ in $\LL$, the following obvious count holds
\[(d-1) = \sum_{p \in {\rm Sing}(\mathcal{L}) \cap \ell} (r-1)t_{r,\ell},\]
where $t_{r,\ell}$ denotes the number of $r$-fold points located on the line $\ell$.
The above formulate tells us that each $k$--fold point from ${\rm Sing}(\mathcal{L}) \cap \ell$ costs $k-1$ intersections (since $k-1$ other lines meet $\LL$ at that point). Hence the number of $k$--fold points in ${\rm Sing}(\mathcal{L}) \cap \ell$ is at most $\lfloor (d-1)/(k-1)\rfloor$.
Counting incidences between lines and $k$--fold points gives us
\[
k\,t_k \;\le\; d\left\lfloor\frac{d-1}{k-1}\right\rfloor,
\]
and thus
\[
t_k \;\le\; \frac{d}{k}\left\lfloor\frac{d-1}{k-1}\right\rfloor,
\]
which proves (2). The final unified estimate follows immediately.
\end{proof}

\begin{remark}
For $k=5$, part (1) specializes to
\[
t_5 \le \frac{d(d-1)-16}{25}.
\]
\end{remark}
For the proof of the next result, which is our main contribution in the present paper, we need to recall Melchior's inequality \cite{Melchior}.
\begin{theorem}[Melchior]
Let $\mathcal{L} \subset \mathbb{P}^{2}_{\mathbb{R}}$ be an arrangement of $d\geq 3$ lines such that $t_{d}=0$. Then
\begin{equation}
\label{Melchior}
t_{2} \geq 3 + t_{4} + 2t_{5} + 3t_{6} + \ldots + (d-4)t_{d-1}.  
\end{equation}
\end{theorem}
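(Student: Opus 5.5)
Since Melchior's inequality is quoted from \cite{Melchior}, I would reprove it by the classical topological argument: an Euler characteristic count on the cell decomposition of $\mathbb{P}^2_\R$ induced by $\mathcal{L}$, combined with the observation that every face has at least three sides.

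First I would set up the cell complex. Let $f_0,f_1,f_2$ be the numbers of vertices, edges and faces. The vertices are the intersection points, so $f_0=\sum_{r\ge 2} t_r$. A line carrying $s$ intersection points is cut into exactly $s$ open segments, because a projective line is a circle. Summing over lines and counting incidences gives $f_1=\sum_{r\ge 2} r\,t_r$. The hypothesis $t_d=0$ (not all lines concurrent) guarantees that every line carries at least two intersection points. I would then check that each component of $\mathbb{P}^2_\R\setminus\bigcup\mathcal{L}$ is an open $2$-cell: choose an arrangement line $\ell$, pass to the affine chart $\mathbb{P}^2_\R\setminus\ell\cong\R^2$, and observe that each face is an intersection of open half-planes, hence convex. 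This is a genuine CW structure, so Euler's formula gives $f_0-f_1+f_2=\chi(\mathbb{P}^2_\R)=1$.

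Second, I would show that every face has at least three edges on its boundary. A face with one edge is impossible, since its boundary would be a whole line containing no vertex, contradicting $t_d=0$. A face with two edges would be a digon bounded by segments of two distinct lines joining the same two vertices. Two distinct projective lines meet only once, so this cannot happen unless $d=2$, which is excluded by $d\ge 3$. Each edge borders exactly two faces (possibly the same face on both sides, which is counted twice), so $2f_1\ge 3f_2$.

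Finally I would combine the two facts:
\[
3=3f_0-3f_1+3f_2\le 3f_0-3f_1+2f_1=3f_0-f_1=\sum_{r\ge 2}(3-r)\,t_r .
\]
Rearranging gives $t_2\ge 3+\sum_{r\ge 4}(r-3)\,t_r$, which is exactly \eqref{Melchior}. The main obstacle is the topological bookkeeping: confirming that the faces really are cells, which uses $t_d=0$, and that the edge count is correct in the projective setting. I would handle this with the affine chart argument above rather than by a general position assumption.
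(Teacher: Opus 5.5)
The paper gives no proof of this statement. It is quoted from \cite{Melchior} and used as a black box in Theorems~\ref{main} and~\ref{c}. Your argument is the classical Euler-characteristic proof, essentially Melchior's own, and its core is correct. With $f_0=\sum_r t_r$, $f_1=\sum_r r\,t_r$, $f_0-f_1+f_2=\chi(\mathbb{P}^2_\R)=1$ and $3f_2\le 2f_1$, one gets $3\le\sum_r(3-r)t_r$, which is exactly \eqref{Melchior}. Compared with the bare citation, this makes the note self-contained. It also shows precisely where realness enters: the value $\chi(\mathbb{P}^2_\R)=1$, and the fact that faces are genuine polygons. That is why, as the introduction remarks, there is no complex counterpart.

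Two points of the topological bookkeeping should be tightened. First, convexity of an open face in the chart $\mathbb{P}^2_\R\setminus\ell$ shows it is an open $2$-cell. A CW structure also needs control of the closures, which you assert rather than prove. Second, your exclusion of $2$-gons only treats two edges on distinct lines. It omits a face bounded by both edges of a single line that carries exactly two vertices. Similarly, in your $1$-gon case the line would carry exactly one vertex, not none, and that is what contradicts $t_d=0$.

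Both points are settled at once by lifting to $S^2$. Since $t_d=0$, the normal vectors of the lines span $\R^3$. Hence the closed cone over the closure of a lifted face contains no line. It therefore lies, apart from the origin, in an open half-space, so the face closure is a compact convex polygon with nonempty interior in a suitable affine chart. This gives a regular CW structure. Such a polygon has at least three sides, each lying on a different line of $\LL$ and each a union of edges. For the missing digon case alone there is also a quicker argument. A face whose frontier lies in one line $\ell_1$ is open and closed in the connected set $\mathbb{P}^2_\R\setminus\ell_1$, hence equal to it, which is absurd for $d\ge 2$.
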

\begin{theorem}
\label{main}
Let $\mathcal{L} \subset \mathbb{P}^{2}_{\mathbb{R}}$ be an arrangement of $d\geq 8$ lines such that $t_{d}=0$. Assume that $\mathcal{L}$ is free. Then $d\leq 522$.
\end{theorem}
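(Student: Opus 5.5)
Throughout, I read the statement as in the section title: besides $t_d=0$, all intersection points have multiplicity at most five, i.e.\ $t_r=0$ for $r\ge 6$. The plan is to compare two estimates for the quantity
\[
\Sigma := \sum_{r\ge 2}(r-1)t_r = t_2+2t_3+3t_4+4t_5 .
\]
Freeness will bound $\Sigma$ from above by roughly $d^2/4$. The real inequalities of Melchior and Shnurnikov will bound it from below by roughly $d^2/4$ plus a correction. Comparing the two should force $d$ to be bounded.

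\emph{Upper bound from freeness.} For a line arrangement, every intersection point of multiplicity $r$ is an ordinary $r$-fold point, with Tjurina number $(r-1)^2$, so $\tau(\mathcal L)=\sum_r (r-1)^2t_r$. Writing $(r-1)^2=r(r-1)-(r-1)$ and using the pair count $\sum_r\binom r2 t_r=\binom d2$ from the proof of Lemma~\ref{thm:tk_global_bound} gives
\[
\tau(\mathcal L)=d(d-1)-\Sigma .
\]
Since $\mathcal L$ is free, Theorem~\ref{dup} gives $\tau=(d-1)(d-d_1-1)+d_1^2$ with $d_1<d/2$. This means $\tau=(d-1)^2-d_1(d-1-d_1)$. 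The product $d_1(d-1-d_1)$ is at most $\lfloor (d-1)/2\rfloor\lceil (d-1)/2\rceil\le (d-1)^2/4$. Hence $\tau\ge \tfrac34(d-1)^2$, and therefore
\[
\Sigma\le d(d-1)-\tfrac34(d-1)^2=\tfrac{(d-1)(d+3)}{4}.
\]

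\emph{Lower bound from real geometry.} This step is a small linear program in the unknowns $t_2,\dots,t_5\ge 0$. The objective is to minimize $\Sigma$ subject to three constraints:
\begin{itemize}
\item the pair count $t_2+3t_3+6t_4+10t_5=\binom d2$;
\item Melchior's inequality $t_2\ge 3+t_4+2t_5$;
\item Shnurnikov's inequality $t_2+\tfrac32t_3\ge 8+\tfrac12t_4+\tfrac52t_5$.
\end{itemize}
For $d\ge 8$, Shnurnikov applies because the maximal multiplicity is at most $5\le d-3$. To leading order the cheapest configurations cost exactly $\tfrac12$ per pair of lines. Examples are $t_4=t_2$, or $t_5$ together with $2t_5$ double points; both give $\Sigma\approx\binom d2/2\approx d^2/4$. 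So the quadratic terms in the two bounds cancel, and everything is decided by the linear and constant terms.

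\emph{Dual certificate.} I will look for nonnegative multipliers $\lambda,\mu$ and a real $\alpha$ such that
\[
\Sigma-\alpha\cdot(\text{pair count})-\lambda\cdot(\text{Melchior})-\mu\cdot(\text{Shnurnikov})
\]
has nonnegative coefficients on every $t_r$. The natural first choice is $\alpha=\tfrac12$, adjusted with $\lambda,\mu$ to absorb the negative coefficients on $t_4$ and $t_5$. I expect this to yield an inequality $\Sigma\ge \tfrac12\binom d2+c_0+\varepsilon\,(\dots)$. If the pure linear-programming bound ties the freeness bound to leading order without producing a useful linear term in $d$, I will bring in the extra information from Lemma~\ref{thm:tk_global_bound}, namely $t_5\le (d(d-1)-16)/25$. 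Together with the integrality of $d_1$, this should give a lower bound whose linear term beats $(d-1)(d+3)/4-\tfrac14 d(d-1)=d-\tfrac34$. Comparing the two sides then gives an explicit linear inequality in $d$, presumably of the form $d\le 522$ after clearing denominators. A short parity split on $d$, coming from the exact maximum of $d_1(d-1-d_1)$, will sharpen the constant.

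\emph{Main obstacle.} The hard step is the dual certificate: finding multipliers for which the quadratic parts cancel but a positive multiple of $d$ survives against the $O(d)$ slack in the freeness bound. I would first try to solve this small LP exactly, treating the $t_5$-heavy and $t_4$-heavy extremal regimes separately. In each regime I would combine Melchior and Shnurnikov with weights chosen to kill the $t_4$ and $t_5$ coefficients.
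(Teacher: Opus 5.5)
\textbf{Verdict: the proposal has a genuine gap.} Your two ingredients are the right ones. Your freeness bound $\Sigma\le (d-1)(d+3)/4$ is exactly the paper's inequality \eqref{geq}. But you never produce the decisive lower bound, and the heuristic guiding your search for it is false:
\begin{itemize}
\item The configuration $t_5=x$, $t_2=2x$, with cost $\tfrac12$ per pair, violates Shnurnikov, since $2x\ge 8+\tfrac52x$ fails.
\item Your other example, $t_4=t_2$, costs $\tfrac47$ per pair, not $\tfrac12$.
\end{itemize}
So once all three constraints are imposed, the quadratic terms do \emph{not} cancel.

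Conversely, the certificate you set out to find, with $\alpha=\tfrac12$, cannot work:
\begin{itemize}
\item Melchior and Shnurnikov have constant right-hand sides, so any such certificate yields only $\Sigma\ge\tfrac12\binom d2+O(1)$. That never contradicts the freeness slack, which is $\tfrac34(d-1)$, not $d-\tfrac34$.
\item Integrality of $d_1$ moves the freeness bound by at most $\tfrac14$.
\item The bound $t_5\le (d(d-1)-16)/25$ from Lemma~\ref{thm:tk_global_bound} is itself a nonnegative combination of the pair count and Shnurnikov, so it adds nothing to your LP.
\end{itemize}
As written, ``presumably $d\le 522$'' is a guess, not a consequence of anything you derive.

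The missing idea is that the contradiction lives at quadratic order, with multiplier $\alpha>\tfrac12$ on the pair count. The paper does this in two steps:
\begin{itemize}
\item The pair count, freeness and three times Melchior force $t_5\ge \tfrac1{24}(d-3)(d-19)$.
\item The pair count and Shnurnikov give $t_5\le\tfrac1{25}(d(d-1)-16)$.
\end{itemize}
Since $\tfrac1{24}>\tfrac1{25}$, this bounds $d$, giving $d\le 522$.

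Within your own LP framework, a working certificate is
\[
39\,\Sigma \;=\; 20\sum_{r}\tbinom r2 t_r \;+\; 7\,(t_2-t_4-2t_5) \;+\; 12\bigl(t_2+\tfrac32t_3-\tfrac12t_4-\tfrac52t_5\bigr) \;+\; 10\,t_4 .
\]
It gives $39\Sigma\ge 10d(d-1)+117$. In other words, the cost is $\tfrac{20}{39}>\tfrac12$ per pair, which is the exact LP value, asymptotically attained by $t_2:t_3:t_5=6:1:3$. Combining this with $4\Sigma\le (d-1)(d+3)$ yields $d^2-118d+585\le 0$, hence $d\le 112$. So once the certificate is actually found, your route would even sharpen the paper's bound. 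Without it, the proposal does not prove the theorem.
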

\begin{proof}
We assumed that $\mathcal{L}$ is free which implies that the following equality holds:
\[ d_{1}^{2} - d_{1}(d-1) +(d-1)^2 = \tau(\mathcal{L}) = t_{2} + 4t_{3} + 9t_{4}+16t_{5},\]
where $d_{1}={\rm mdr}(\mathcal{L})$. After applying the naive combinatorial count we obtain
\[d_{1}^{2}-d_{1}(d-1) + (t_{2}+2t_{3}+3t_{4}+4t_{5}-d+1) = 0.\]
We compute the discriminant of the above equation and we obtain
\begin{equation}
\label{cruc}
\triangle_{d_{1}} := (d-1)^2 - 4(1-d) -4(t_{2}+2t_{3}+3t_{4}+4t_{5}).
\end{equation}
Since $\mathcal{L}$ is free we have $\triangle_{d_{1}}\geq 0$, and this means that
\[(d-1)^2+4d-4 \geq 4t_{2} + 8t_{3} + 12t_{4} + 16t_{5} = 2t_{2}+2t_{3}-4t_{5}+d(d-1).\]
Hence
\begin{equation}
\label{n5}
\frac{3}{2}(d-1) + 2t_{5} \geq t_{2}+t_{3}.
\end{equation}
Moreover, equation \eqref{cruc} gives us yet another non-trivial bound on the intersection points, namely
\begin{equation}
\label{geq}
\frac{1}{4}(d^{2}+2d-3) \geq t_{2}+2t_{3}+3t_{4}+4t_{5}.
\end{equation}
Let us come back the na\"ive combinatorial count:
\begin{multline*}
\frac{d(d-1)}{2} = (t_{2} + 2t_{3} + 3t_{4}+4t_{5}) +t_{3} + 3(t_{4} + 2t_{5}) \stackrel{\eqref{geq}}{\leq} \frac{1}{4}(d^2+2d-3) + t_{3} + 3(t_{4}+2t_{5}) \\ \stackrel{\eqref{Melchior}}{\leq} \frac{1}{4}(d^2+2d-3)+t_{3} + 3(t_{2}-3) \leq \frac{1}{4}(d^{2}+2d-3)-9 + 3(t_{2}+t_{3}) \\
\stackrel{\eqref{n5}}{\leq}  \frac{1}{4}(d^{2}+2d-3)-9 + \frac{9}{2}(d-1)+6t_{5},
\end{multline*}
and hence we get 
$$t_{5} \geq \frac{1}{24}(d-3)(d-19).$$
On the other hand we have an upper-bound of the form
$$t_{5} \leq \frac{d(d-1)-16}{25},$$
hence we get
\[\frac{1}{24}(d-3)(d-19) \leq t_{5} \leq \frac{1}{25}(d(d-1)-16),\]
which gives us $d \in [8,9,10, \ldots , 522],$ and this completes the proof.
\end{proof}
\section{The boundedness of real plus-one generated line arrangements with points of multiplicity at most four}
\begin{theorem}\label{c}
Let $\mathcal{L} \subset \mathbb{P}^{2}_{\mathbb{R}}$ be an arrangement of $d\geq 3$ lines such that $t_{d}=0$ and it admits only double, triple and quadruple intersection points. Assume that $\mathcal{L}$ is plus-one generated. Then $d \leq 47$.
\end{theorem}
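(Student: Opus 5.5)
The plan is to repeat the strategy of Theorem~\ref{main}: an algebraic inequality from the plus-one generated condition, combined with the na\"ive combinatorial count and Melchior's inequality, should force $d$ to be small. Let $(d_1,d_2,d_3)$ be the exponents. Since $t(\mathcal{L})=1$, we have $d_1+d_2=d$. Put $\delta:=d_3-d_2+1\geq 1$. Proposition~\ref{prop:DimStr} then gives
\[
\tau(\mathcal{L}) = t_2+4t_3+9t_4 = (d-1)^2-d_1(d-d_1-1)-\delta .
\]
The na\"ive count $\binom{d}{2}=t_2+3t_3+6t_4$ gives $t_2+4t_3+9t_4 = d(d-1)-(t_2+2t_3+3t_4)$. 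Substituting turns the identity into a quadratic equation in $d_1$:
\[
d_1^2-d_1(d-1)+\bigl(t_2+2t_3+3t_4-d+1-\delta\bigr)=0 .
\]
Since $d_1$ is an integer root, the discriminant is nonnegative:
\[
(d-1)^2+4(d-1)+4\delta \;\geq\; 4(t_2+2t_3+3t_4)=2t_2+2t_3+d(d-1).
\]
Simplifying yields the analogue of \eqref{n5}:
\[
t_2+t_3 \;\leq\; \tfrac{3}{2}(d-1)+2\delta .
\]

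The key remaining input, and the step I expect to be the main obstacle, is an upper bound on $\delta$ that is linear in $d$. I would use the standard facts for plus-one generated curves: $d_1\leq d_2\leq d_3$, $d_2\geq d/2$ because $d_1+d_2=d$, and $d_3\leq d-1$, since $d_3 \le d-1$ holds in the plus-one generated situation. Together these give
\[
\delta\leq d-1-d_2+1\leq \tfrac{d}{2}.
\]
If only the weaker estimate $\delta\leq d-1$ turns out to be available, the argument still goes through, with a worse constant. In either case we get $t_2+t_3\leq c\,(d-1)+O(1)$ with an explicit $c$, namely $c=\tfrac{5}{2}$ or $c=\tfrac{7}{2}$.

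To conclude, apply Melchior's inequality \eqref{Melchior}, which here reads $t_2\geq 3+t_4$, so that $t_4\leq t_2-3$. The na\"ive count then gives
\[
\frac{d(d-1)}{2}=t_2+3t_3+6t_4\leq 7t_2+3t_3-18\leq 7(t_2+t_3)-18 .
\]
Inserting the linear bound on $t_2+t_3$ gives a quadratic inequality in $d$. With the weaker constant this is
\[
d(d-1)\leq 49(d-1)-36,
\]
whose solutions satisfy $d\leq 47$ after rounding. The sharper bound on $\delta$ would give an even smaller value. Finally I would check that the hypotheses $d\geq 3$ and $t_d=0$ are exactly what Melchior's inequality requires, and that no further case distinction is needed.
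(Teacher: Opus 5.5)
Your setup is the paper's: the discriminant of the quadratic in $d_1$, followed by Melchior together with the naive count. Your inequality $t_2+t_3\le\tfrac{3}{2}(d-1)+2\delta$ is correct. The gap is in the last step. The only computation you actually carry out uses the fallback $\delta\le d-1$, and it does not prove the theorem. The inequality $d(d-1)\le 49(d-1)-36$ is $d^2-50d+85\le 0$, whose larger root is $25+\sqrt{540}\approx 48.24$. Moreover, $d=48$ genuinely satisfies it, since $48\cdot 47=2256\le 2267=49\cdot 47-36$. So your claim that the argument ``still goes through with a worse constant'' is false. One needs at least $\delta\le d-2$, which is exactly what the paper uses (citing Schenck for line arrangements). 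That turns the inequality into $d^2-50d+113\le 0$, i.e.\ $d\le 25+\sqrt{512}<48$.

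Consequently the whole proof rests on your sharper bound, and its input $d_3\le d-1$ is only asserted. The phrase ``since $d_3\le d-1$ holds in the plus-one generated situation'' just restates the claim. The bound is true, but you must cite or prove it. One way is as follows. In the unique relation, the coefficient $\ell$ of $r_3$ is linear, because its degree is $d_1+d_2+1-d=1$. Hence the syzygy bundle $E\subset\mathcal{O}^3$, which has $c_1(E)=1-d$, surjects onto $\mathcal{O}_L(-d_3)$ for $L=\{\ell=0\}$. Since $E|_L\subset\mathcal{O}_L^3$, it splits as $\mathcal{O}_L(a)\oplus\mathcal{O}_L(b)$ with $a,b\le 0$ and $a+b=1-d$. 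A nonzero map to $\mathcal{O}_L(-d_3)$ then forces $-d_3\ge 1-d$. Alternatively, cite Schenck's result for line arrangements, as the paper does to get $h\le d-2$. With $d_3\le d-1$ in place you get $\delta\le d-d_2=d_1\le d/2$, hence $d(d-1)\le 35d-57$. This is $d^2-36d+57\le 0$, giving $d\le 34$, which is stronger than the stated $d\le 47$.
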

\begin{proof}
Our proof goes along similar lines as for Theorem \ref{main}. First of all, the assumption that $\mathcal{L}$ is plus-one generated implies that 
$$t_{2} + 4t_{3}+9t_{4} = d(d-1) - t_{2} - 2t_{3} - 3t_{4} = (d-1)^2 - d_{1}(d-d_{1}-1)-h,$$
where $h:=d_{3}-d_{2}+1\geq 1$.
This leads us to
$$d_{1}^{2}-d_{1}(d-1)+(t_{2}+2t_{3}+3t_{4}-d+1-h)=0.$$
The condition that $\mathcal{L}$ is plus-one generated implies that the discriminant $\triangle_{d_{1}} \geq 0$, and hence we get
$$d^{2}+2d-3+4h \geq 4t_{2}+8t_{3} + 12t_{4} = 2(t_{2}+t_{3})+d(d-1).$$
Summing up, we reach the following inequality:
\begin{equation}
\label{eee}
t_{2}+t_{3}\leq \frac{3(d-1)}{2}+2h.
\end{equation}
Let us come back to the na\"ive combinatorial count, we have
\begin{multline*}
d(d-1)=2t_{2}+6t_{3}+12t_{4}  \stackrel{\eqref{Melchior}}{\leq} 14t_{2}+6t_{3}-36 \leq 14(t_{2}+t_{3})-36 \stackrel{\eqref{eee}}{\leq}21(d-1)+28h-36,
\end{multline*}
and hence we get $$d^{2}-22d+57-28h \leq 0.$$
This gives us that $d\leq 11 + 2\sqrt{16+7h}.$
Since for line arrangements we have $h \leq d-2$ (\cite{HSch}, Corollary 3.5) we finally obtain the following inequality
\[d \leq 11 + 2\sqrt{2+7d},\]
which gives us $d\leq 47$, and this completes the proof.
\end{proof}

\section*{Acknowledgement}
The author would like to thank Piotr Pokora for his guidance and valuable remarks related to this work.

 Marek Janasz is supported by the National 
 Science Centre (Poland) Sonata Bis Grant  
 \textbf{2023/50/E/ST1/00025}. For the 
 purpose of Open Access, the author has 
 applied a CC-BY public copyright license 
 to any Author Accepted Manuscript (AAM) 
 version arising from this submission.

\bigskip
Affiliation of the author:
\noindent
Department of Mathematics,
University of the National Education Commission Krakow,
Podchor\c a\.zych 2,
PL-30-084 Krak\'ow, Poland. \\
\nopagebreak
\noindent
Marek Janasz: \texttt{marek.janasz@uken.krakow.pl} 

\begin{thebibliography}{9}

\bibitem{Abe18}  T. Abe, Plus--one generated and next to free arrangements of hyperplanes. \textit{Int. Math. Res. Not.} \textbf{2021(12)}: 9233 -- 9261 (2021).

\bibitem{her}
T. Abe, A. Dimca, and P. Pokora, A new hierarchy for complex plane curves. \textit{Canad. Math. Bull.} Advance Publication 1 -- 24 (2025), \url{https://doi.org/10.4153/S0008439525101422}.

\bibitem{RCS}  
A. Dimca,  \textit{Topics on real and complex singularities. An introduction.} Advanced Lectures in Mathematics. Braunschweig/Wiesbaden: Friedr. Vieweg \& Sohn. 1987.

\bibitem{Dimca}
A. Dimca,  \textit{Hyperplane arrangements. An introduction}. Universitext. Cham: Springer. xii, 200 p. (2017).

\bibitem{DS20} 
A. Dimca and G. Sticlaru, Plane curves with three syzygies, minimal Tjurina curves, and nearly cuspidal curves. \textit{Geom. Dedicata}  {\bf 207}:  29 -- 49 (2020).

\bibitem{DimPok} 
A. Dimca and P. Pokora, Maximizing Curves Viewed as Free Curves. \textit{Int. Math. Res. Not. IMRN} \textbf{22}:  19156 -- 19183 (2023).

\bibitem{duP}
A. Du Plessis and C. T. C. Wall, Application of the theory of the discriminant to highly singular plane curves. \textit{Math. Proc. Camb. Philos. Soc.} \textbf{126(2)}: 259 -- 266 (1999).

\bibitem{Melchior}  E. Melchior, \"{U}ber Vielseite der Projektive Ebene. Deutsche Mathematik {\bf 5}: 461 – 475 (1941).

\bibitem{JanLes05}
M. Janasz and I. Leśniak, On free line arrangements with double, triple and quadruple points. \textit{European Journal of Mathematics} \textbf{11}: Art. Id. 81 (2025).

\bibitem{Saito}
K. Saito, Theory of logarithmic differential forms and logarithmic vector fields. \textit{J. Fac. Sci., Univ. Tokyo, Sect. I A} \textbf{27}: 265 -- 291 (1980).

\bibitem{HSch} H. Schenck, Elementary modifications and line configurations in $\PP^2$. Comment. Math. Helv. 78: 447 -- 462 (2003). 

\bibitem{Sh}
I. N. Shnurnikov, A $t_{k}$ inequality for arrangements of pseudolines. \textit{Discrete Comput. Geom.} \textbf{55(2)}: 284 -- 295 (2016).
\end{thebibliography}
\end{document}